\definecolor{linkred}{RGB}{128,0,128}
\definecolor{linkblue}{RGB}{16, 78, 139}
	\titlespacing{\section}{0pt}{12pt}{0pt}
	\titlespacing{\subsection}{0pt}{6pt}{0pt}
\long\def\@footnotetext#1{%
\H@@footnotetext{%
\ifHy@nesting 
\hyper@@anchor{\@currentHref}{#1}%
\else 
\Hy@raisedlink{\hyper@@anchor{\@currentHref}{\relax}}#1%
\fi 
}}
\def\@footnotemark{%
\leavevmode 
\ifhmode\edef\@x@sf{\the\spacefactor}\nobreak\fi 
\H@refstepcounter{Hfootnote}%
\hyper@makecurrent{Hfootnote}%
\hyper@linkstart{link}{\@currentHref}%
\@makefnmark 
\hyper@linkend 
\ifhmode\spacefactor\@x@sf\fi 
\relax 
}%
\renewcommand*\@footnotemark{%
\leavevmode 
\ifhmode 
\edef\@x@sf{\the\spacefactor}%
\FN@mf@check 
\nobreak 
\fi 
\H@refstepcounter{Hfootnote}%
\hyper@makecurrent{Hfootnote}%
\hyper@linkstart{link}{\@currentHref}%
\@makefnmark 
\hyper@linkend 
\ifFN@pp@towrite 
\FN@pp@writetemp 
\FN@pp@towritefalse 
\fi 
\FN@mf@prepare 
\ifhmode\spacefactor\@x@sf\fi 
\relax%
}%
\theoremstyle{plain}
\newtheorem{theorem}{Theorem}[section]
\newtheorem{lemma}[theorem]{Lemma}
\newtheorem{corollary}[theorem]{Corollary}
\newtheorem*{rep@theorem}{\rep@title}
\newcommand{\newreptheorem}[2]{%
\newenvironment{rep#1}[1]{%
 \def\rep@title{#2 \ref{##1}}%
 \begin{rep@theorem}}%
 {\end{rep@theorem}}}
\theoremstyle{definition}
\newtheorem{remark}[theorem]{Remark}
\newcommand{\Hyp}{{\mathbb H}}
\newcommand{\M}{{\mathcal M}}
\newcommand{\area}{{\rm area}}
\newcommand{\arcsinh}{{\,\rm arcsinh}}
\newcommand{\arccosh}{{\,\rm arccosh}}
\newcommand{\injrad}{{\rm injrad}}
\long\def\symbolfootnote[#1]#2{\begingroup%
\def\thefootnote{\fnsymbol{footnote}}\footnote[#1]{#2}\endgroup}
\def\blfootnote{\xdef\@thefnmark{}\@footnotetext}
\begin{document}

{\Large \bfseries Geometric filling curves on surfaces}

{\large Ara Basmajian\symbolfootnote[1]{\normalsize Research supported by a PSC-CUNY Grant and a Simons foundation grant}, Hugo Parlier\symbolfootnote[7]{\normalsize Research supported by Swiss National Science Foundation grant number PP00P2\textunderscore 153024 }
 and Juan Souto\symbolfootnote[12]{\normalsize Research supported by the National Science Foundation Grant No. DMS-1440140 while the third author was in residence at the Mathematical Sciences Research Institute in Berkeley, California, during the Fall 2016 semester\\
{\em 2010 Mathematics Subject Classification:} Primary: 30F10. Secondary: 32G15, 53C22. \\
{\em Key words and phrases:} closed geodesics, hyperbolic surfaces}}

{\bf Abstract.} 
This note is about a type of quantitative density of closed geodesics on closed hyperbolic surfaces. The main results are upper bounds on the length of the shortest closed geodesic that $\varepsilon$-fills the surface. 

\vspace{1cm}

\section{Introduction}

Closed geodesics on hyperbolic surfaces provide a concrete link between algebraic, geometric and topological approaches to understanding the geometry of these surfaces and their moduli spaces. One feature of the hyperbolic geometry is that they are dense in both the surface and the unit tangent bundle. This is in strong contrast to the set of simple closed geodesics or those with bounded intersection number which are nowhere dense and in fact Haussdorf dimension $1$ \cite{Birman-Series}. In this article, we investigate a type of quantitative density of closed geodesics.

Given a closed hyperbolic surface $X$ and $\varepsilon>0$, we're interested in finding the shortest closed geodesic that is $\varepsilon$-dense, by which we mean that all points of $X$ are at distance at most $\varepsilon$ from the geodesic. Our main result is the following. 
\begin{theorem} \label{thm:denseX}
For all $X\in \M_g$ there exists a constant $C_X>0$ and such that for all $\varepsilon \leq \frac{1}{2}$ there exists a closed geodesic $\gamma_\varepsilon$ that is $\varepsilon$-dense on $X$ and such that
$$
\ell(\gamma_\varepsilon) \leq C_X \frac{1}{\varepsilon}  \log\left(\frac{1}{\varepsilon}\right)
$$
\end{theorem}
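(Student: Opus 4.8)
The plan is to produce, for each $\varepsilon\le\tfrac12$, a closed geodesic that is $\tfrac\varepsilon2$-dense and has length $\le C_X\tfrac1\varepsilon\log\tfrac1\varepsilon$; such a geodesic is in particular $\varepsilon$-dense. Fix a maximal $\tfrac\varepsilon4$-separated subset $P_\varepsilon=\{p_1,\dots,p_N\}$ of $X$ and put $B_i=B(p_i,\tfrac\varepsilon4)$. Then $P_\varepsilon$ is $\tfrac\varepsilon4$-dense, so a closed curve meeting every $B_i$ is automatically $\tfrac\varepsilon2$-dense, and since the balls $B(p_i,\tfrac\varepsilon8)$ are pairwise disjoint a packing argument gives $N\le C_X\varepsilon^{-2}$; there are thus only $\lesssim_X\varepsilon^{-2}$ balls to be hit. (Conversely, an $\varepsilon$-dense closed curve has length $\ge\area(X)/2\sinh\varepsilon\gtrsim_g\varepsilon^{-1}$ since its $\varepsilon$-neighbourhood covers $X$, so the bound is sharp up to the logarithm, which below is of a coupon-collector nature.) The construction has two steps: first find a long geodesic \emph{segment} meeting all the $B_i$, then promote it to an honest closed geodesic.

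\emph{Step 1.} Fix a basepoint $x_0$, regard $T^1_{x_0}X$ as a circle carrying angular measure of total mass $2\pi$, and for a ball $B$ let $E_B(L)\subset T^1_{x_0}X$ be the set of directions whose forward geodesic avoids $B$ throughout $[0,L]$. The crucial estimate is
$$
\mathrm{meas}\bigl(E_B(L)\bigr)\ \le\ 2\pi\,e^{-c_X\varepsilon L}\qquad(L\ge 0)
$$
for some $c_X>0$ independent of $\varepsilon$ and of the ball $B$ (ranging over the $B_i$). Granting it, $\sum_i\mathrm{meas}(E_{B_i}(L))\le 2\pi C_X\varepsilon^{-2}e^{-c_X\varepsilon L}<2\pi$ once $c_X\varepsilon L>\log(C_X\varepsilon^{-2})$, which for $\varepsilon\le\tfrac12$ holds as soon as $L=L_\varepsilon:=\lceil C_X'\,\varepsilon^{-1}\log\varepsilon^{-1}\rceil$. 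For that $L$ the sets $E_{B_i}(L_\varepsilon)$ do not cover $T^1_{x_0}X$, so some direction $v_0$ gives a geodesic segment $\sigma_\varepsilon$ of length $L_\varepsilon$ meeting every $B_i$, and in particular $\tfrac\varepsilon2$-dense. The displayed bound says that the geodesic flow escapes the complement of $\{v\in T^1X:\pi(v)\in B\}$ at a rate $\asymp_X\varepsilon$, linear in $\varepsilon$ because the flux of Liouville measure into that set is $\asymp\length(\partial B)\asymp\varepsilon$; I would prove it by combining the soft uniform input ``from any $v\in T^1X$ a set of directions at $\pi(v)$ of measure $\ge c_X\varepsilon$ yields geodesics entering $B$ within a fixed time $T_X$'' (lift to $\Hyp$: some lift of $B$ lies within $\diam(X)$ of a lift of $\pi(v)$ and subtends angle $\gtrsim\varepsilon$ there) with the exponential mixing of the geodesic flow on the compact surface $X$, which upgrades this to honest exponential decay in $L$ — equivalently, via effective equidistribution (of lattice orbits on $\partial\Hyp$, or of closed geodesics) one counts the geodesics of length $\le L$ avoiding $B$.

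\emph{Step 2.} The geodesic flow of $X$ is Anosov and topologically mixing, so it has Bowen's specification property, with specification time $T(\eta)=O(\log\tfrac1\eta)$: any orbit segment of length $\ell$ is $\eta$-shadowed on $[0,\ell]$ by a periodic orbit of period $\ell+T(\eta)$. Applied to $\sigma_\varepsilon$ with $\eta=\tfrac\varepsilon2$ this yields a closed geodesic $\gamma_\varepsilon$ of length $L_\varepsilon+T(\tfrac\varepsilon2)$ whose image in $X$ stays within $\tfrac\varepsilon2$ of $\sigma_\varepsilon$; since $\sigma_\varepsilon$ is $\tfrac\varepsilon2$-dense, $\gamma_\varepsilon$ is $\varepsilon$-dense, and $\ell(\gamma_\varepsilon)\le L_\varepsilon+C_X\log\tfrac1\varepsilon\le C_X\,\tfrac1\varepsilon\log\tfrac1\varepsilon$. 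The hard part will be the estimate on $\mathrm{meas}(E_B(L))$ in Step 1: the whole point is that the escape rate is \emph{linear} in $\varepsilon$ — a rate of order $1/\log\tfrac1\varepsilon$, which a naive logarithmic-capacity heuristic might suggest, would only give a double logarithm in the theorem — and getting this requires an effective mixing/equidistribution input rather than a purely geometric one; a little extra care is also needed when $B$ meets the thin part of $X$, though there the flux into $B$ is only larger.
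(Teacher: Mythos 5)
Your route is genuinely different from the paper's: instead of constructing the dense object by hand, you hit $\asymp\varepsilon^{-2}$ small balls with a single long geodesic segment found by an escape-rate/union-bound (coupon-collector) argument, and then close it up using quantitative specification. The architecture is coherent, the numerology is right (a union bound over $\varepsilon^{-2}$ balls with escape rate linear in $\varepsilon$ gives $L\asymp\varepsilon^{-1}\log\varepsilon^{-1}$, and closing up costs only $O(\log\varepsilon^{-1})$), and you correctly identify that the whole game is the linearity in $\varepsilon$ of the escape rate. But as written the two quantitative inputs that carry all the difficulty are asserted rather than proved, and in both cases the gap is real. (1) The bound $\mathrm{meas}(E_B(L))\le 2\pi e^{-c_X\varepsilon L}$, uniform over all balls of radius $\varepsilon/4$ and over $\varepsilon$, does not follow from the stated combination of the ``soft'' angular estimate with exponential mixing: the naive iteration (``in each time window a proportion $c_X\varepsilon$ of the surviving directions is removed'') needs the angular estimate to be applied to the \emph{conditional} distribution on the surviving set, which is far from uniform; making it rigorous requires a bounded-distortion/porosity argument on the circle of directions at $x_0$ (surviving intervals at scale $e^{-t}$ lose a definite $\varepsilon$-proportion at each generation), or an effective open-systems theorem with constants uniform in the hole. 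Mixing estimates applied to indicators of $\varepsilon$-scale sets also carry norm-dependent constants that you would have to track. This is provable, but it is a substantial argument, not a citation. (2) Specification with transition time $T(\eta)=O(\log(1/\eta))$ is not Bowen's standard statement (which gives no rate in $\eta$); for hyperbolic surfaces it is true, but proving it is essentially the content of the paper's Theorem \ref{thm:maintech} (even the $N=1$ case needs the corner-cutting estimates of Lemma \ref{lem:tech}, which show that closing up a segment costs only an additive $O(\log(1/\varepsilon))$ while keeping the segment inside the $\varepsilon$-neighborhood of the resulting closed geodesic). So your Step 2 quietly assumes the key technical theorem of the paper.

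For comparison, the paper avoids any equidistribution or escape-rate input altogether: it covers $X$ by boundedly many balls of radius $\min\{1,\injrad(X)\}$, fills each by $\asymp\varepsilon^{-1}$ explicit parallel chords of length at most $2$, and then proves a multi-segment closing theorem (Theorem \ref{thm:maintech}) by elementary hyperbolic trigonometry: each segment is extended by $\log(1/\varepsilon)+O_X(1)$ until it crosses a fixed filling geodesic $\gamma_0$ at a definite angle, the extended segments are concatenated along $\gamma_0$, and a nested-lifts argument shows the geodesic representative is essential and contains every segment in its $\varepsilon$-neighborhood. There the $\log(1/\varepsilon)$ comes from the per-segment closing cost, whereas in your scheme it comes from the union bound; your approach, if completed, would buy a statement closer to equidistribution (a single orbit segment hitting all cells), at the price of genuinely dynamical inputs, while the paper's is entirely elementary and self-contained. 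To make your proposal a proof you would need to supply complete arguments for (1) and (2), and (2) alone is already comparable in length to the paper's technical section.
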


Another measure of complexity for a closed geodesic is its self-intersection number. Length and  self-intersection numbers of a curve are of course related (see for instance \cite{BasmajianUniversal}). Instead of minimizing length, one can try and minimize self-intersection for $\varepsilon$ filling curves. As a corollary of Theorem \ref{thm:denseX} we obtain the following.
\begin{corollary}\label{cor:int}
For all $X\in \M_g$ there exists a constant $C_X>0$ such that for all $\varepsilon \leq \frac{1}{2}$ there exists a closed geodesic $\gamma_\varepsilon$ that is $\varepsilon$-dense on $X$ and such that
$$
i(\gamma_\varepsilon, \gamma_\varepsilon) \leq C_X \frac{1}{\varepsilon^2}  \left(\log\left(\frac{1}{\varepsilon}\right)\right)^2
$$
\end{corollary}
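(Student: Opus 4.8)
The plan is to deduce the corollary directly from Theorem~\ref{thm:denseX} by invoking the well known fact that, on a fixed hyperbolic surface, the self-intersection number of a closed geodesic grows at most quadratically in its length (see for instance \cite{BasmajianUniversal}). Precisely, I would first establish: there is a constant $D_X>0$, depending only on $X$ — indeed only on a lower bound for the systole of $X$ — so that every closed geodesic $\gamma$ on $X$ satisfies $i(\gamma,\gamma)\le D_X\,\ell(\gamma)^2$. Granting this, pick the $\varepsilon$-dense geodesic $\gamma_\varepsilon$ provided by Theorem~\ref{thm:denseX}, so that $\ell(\gamma_\varepsilon)\le C_X\frac1\varepsilon\log\frac1\varepsilon$, and then
$$
i(\gamma_\varepsilon,\gamma_\varepsilon)\ \le\ D_X\,\ell(\gamma_\varepsilon)^2\ \le\ D_X C_X^2\,\frac{1}{\varepsilon^2}\Big(\log\frac1\varepsilon\Big)^2 ,
$$
so after renaming the constant (replacing $C_X$ by $\max\{C_X,D_XC_X^2\}$) we obtain exactly the asserted estimate.

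Thus the whole content is the quadratic bound, which I would prove by the standard subdivision argument. Let $\delta=\delta(X)>0$ be small enough that every closed ball of radius $\delta$ in $X$ is isometric to a ball in $\Hyp^2$; one may take $\delta$ to be half the systole of $X$. Given a closed geodesic $\gamma$, cut it at equally spaced points into $N=\lceil 2\ell(\gamma)/\delta\rceil$ consecutive geodesic sub-arcs $a_1,\dots,a_N$, each of length at most $\delta/2$, and after an arbitrarily small rotation of the cut points assume that no self-intersection point of $\gamma$ is one of these endpoints. Each $a_i$ is simple, because a geodesic arc shorter than the systole cannot self-intersect. If two distinct arcs $a_i,a_j$ meet at a point $p$, then both lie in the closed $\delta/2$-ball about $p$, hence inside an embedded ball isometric to one in $\Hyp^2$, where two geodesic segments cross at most once; so $a_i$ and $a_j$ share at most one point. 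Since every self-intersection of $\gamma$ occurs between two distinct arcs (interiors only, by the perturbation), we get
$$
i(\gamma,\gamma)\ \le\ \binom{N}{2}\ \le\ \frac{N^2}{2}\ \le\ \frac12\Big(\frac{2\ell(\gamma)}{\delta}+1\Big)^2\ \le\ D_X\,\ell(\gamma)^2 ,
$$
the last inequality holding with $D_X$ depending only on $\delta$ since $\ell(\gamma)\ge\sys(X)\ge 2\delta$.

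The step needing the most care is the bookkeeping in that last display: checking that counting self-intersections arc-by-arc neither misses nor double counts a point (this is what the generic perturbation of the cut points handles), and that the two hyperbolic-geometry inputs — simplicity of short geodesic arcs and the ``cross at most once'' property inside an embedded hyperbolic ball — are stated correctly. None of this is deep, but it is the only place where hyperbolic geometry enters beyond the appeal to Theorem~\ref{thm:denseX}. I would stress that the constant does not deteriorate as $\varepsilon\to 0$: $\delta(X)$ is fixed once $X$ is fixed, so the entire $\varepsilon$-dependence of the corollary comes from squaring the length bound of Theorem~\ref{thm:denseX}.
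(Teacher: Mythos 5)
Your proposal is correct and follows essentially the same route as the paper: deduce the corollary from Theorem \ref{thm:denseX} together with a quadratic bound on self-intersection obtained by cutting the geodesic into arcs shorter than the injectivity radius (half the systole) and noting that any two such arcs intersect at most once. You simply fill in the bookkeeping details that the paper leaves as a sketch.
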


The main ingredient in the proof of Theorem \ref{thm:denseX} is a more technical result (Theorem \ref{thm:maintech}) which shows the existence of a closed geodesic of bounded length that contains a given set of geodesic segments on $X$ in its $\varepsilon$-neighborhood. Theorem \ref{thm:denseX} then follows by finding an appropriate set of geodesic segments that fill the surface. This tool can also be used to find a similar quantitive density result in the unit tangent bundle of $X$ (Theorem \ref{thm:tangent}).

The growth rate of length in Theorem \ref{thm:denseX} is perhaps not optimal but if not it is off by at most a factor of $ \log\left(\frac{1}{\varepsilon}\right)$. Indeed if a closed geodesic is $\varepsilon$-dense then the area of its $\varepsilon$-neighborhood is the area of the surface. In $\Hyp$, for small $\varepsilon$, the area of a $\varepsilon$-neighborhood of a geodesic segment of length $\ell$ is roughly $\varepsilon \ell$. And putting these things together tells us that if a geodesic $\gamma$ $\varepsilon$-fills, it must be of length at least $\frac{\area(X)}{\varepsilon}$. Understanding the $\log\left(\frac{1}{\varepsilon}\right)$ discrepancy between the upper and lower bounds seems to be an interesting problem.

{\bf Acknowledgement.}

The authors acknowledge support from U.S. National Science Foundation grants DMS 1107452, 1107263, 1107367 RNMS: Geometric structures And Representation varieties (the GEAR Network) 

\section{Geodesics in $\Hyp$ and on surfaces}

We begin with a simple lemma about hyperbolic polygons and angles of geodesics traversing them. By angle between two geodesics we mean the minimal angle, so in particular angles are always less than $\frac{\pi}{2}$. 

\begin{lemma}\label{lem:poly}
Let $P$ be a convex polygon in $\Hyp$. Then there exists a $\theta_P>0$ such that any geodesic segment with endpoints on distinct sides of $\partial P$ forms an angle of at least $\theta_P$ in one of its endpoints. 
\end{lemma}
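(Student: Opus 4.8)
The plan is to argue by compactness after reducing to a single pair of sides. Since $P$ is a bounded convex polygon (which is the case in our application), it has finitely many sides $s_1,\dots,s_k$, each a compact geodesic segment, and finitely many vertices, each with interior angle in $(0,\pi)$. Put $\theta_0=\min_v\min(\theta_v,\pi-\theta_v)>0$, the minimum over vertices $v$ of $P$, where $\theta_v$ is the interior angle at $v$. It suffices to produce, for each pair $i\neq j$, a constant $\theta_{ij}>0$ so that every geodesic segment with one endpoint on $s_i$ and the other on $s_j$ makes an angle at least $\theta_{ij}$ with $\partial P$ at one of its two endpoints; then $\theta_P=\min_{i\neq j}\theta_{ij}$ works. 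I would also record the elementary fact that, by convexity of $P$, a geodesic segment with endpoints $p,q$ on $\partial P$ has its interior contained in $\mathrm{int}(P)$ unless $p$ and $q$ lie on a common side of $P$; in the latter case the hypothesis of the lemma is vacuous, so we may assume throughout that the segment genuinely crosses the interior, which already makes the angle with $\partial P$ at each endpoint positive.

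\textbf{Setting up the contradiction.} Fix $i\neq j$ and suppose there were a sequence of geodesic segments $c_n$ from $p_n\in s_i$ to $q_n\in s_j$ (with $p_n\neq q_n$ and interior in $\mathrm{int}(P)$) making an angle less than $1/n$ with $\partial P$ at \emph{both} endpoints. By compactness of the closed sides, after passing to a subsequence $p_n\to p^*\in s_i$ and $q_n\to q^*\in s_j$. If $p^*\neq q^*$, then $c_n$ converges to the nondegenerate geodesic segment $c^*=[p^*,q^*]$, whose directions at $p^*,q^*$ are the limits of those of $c_n$; hence $c^*$ makes a zero angle with $s_i$ at $p^*$ and with $s_j$ at $q^*$. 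A zero angle with $s_i$ at $p^*$ forces $c^*$ to be tangent at $p^*$ to the complete geodesic containing $s_i$, hence (two geodesics tangent at a point coincide) to lie on that geodesic, and, being contained in the convex set $\overline P$, to lie in $s_i$; likewise $c^*\subset s_j$. Then either $s_i\cap s_j$ contains the nondegenerate segment $c^*$, impossible for distinct sides of a convex polygon, or (if $p^*$ or $q^*$ is a vertex) $q^*\in s_i\cap s_j$ is the common vertex and the angle of $c^*$ with $s_j$ there is at least $\theta_0>0$ — contradicting that both endpoint-angles tend to $0$.

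\textbf{The degenerate case.} It remains to rule out $p^*=q^*$. This common limit must be a shared vertex $v$ of $s_i$ and $s_j$ (so these sides are adjacent), with $p_n,q_n\to v$ and $p_n,q_n\neq v$. Here I would apply Gauss--Bonnet to the small geodesic triangle $T_n$ with vertices $v,p_n,q_n$: its angle at $v$ tends to $\theta_v$ and $\area(T_n)\to 0$, so the other two angles $\angle(v,p_n,q_n)$ and $\angle(v,q_n,p_n)$ sum to $\pi-\theta_v+o(1)$. In particular one of them, say $\angle(v,p_n,q_n)$, is at least $\tfrac{\pi-\theta_v}{2}-o(1)$, while it is also at most $\pi-\theta_v+o(1)$, hence bounded away from $\pi$. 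Since $p_n$ lies in the interior of $s_i$ for large $n$, the angle of $c_n$ with $\partial P$ at $p_n$ equals $\min\bigl(\angle(v,p_n,q_n),\,\pi-\angle(v,p_n,q_n)\bigr)$, which is therefore bounded below by a positive constant depending only on $\theta_v$ — contradicting that it tends to $0$. This contradiction completes the proof, with $\theta_{ij}$ the (positive) infimum of the larger endpoint-angle over the relevant chords.

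\textbf{Main obstacle.} The one genuinely delicate point is the degenerate case above: for chords that shrink to a point at a vertex, neither endpoint-angle need be individually bounded below, and it is the \emph{maximum} of the two that is controlled, via the angle-sum identity on the vanishing triangle. Everything else is soft — finiteness of the number of sides, compactness of the closed sides, and continuity of the angle function away from the degenerate locus.
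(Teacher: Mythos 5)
Your argument is correct, but it follows the route the paper only alludes to and does not actually carry out: the paper's first sentence notes that the lemma ``follows by a compactness argument since a geodesic segment cannot have angle $0$ in both endpoints,'' and then gives instead a direct construction. Namely, for each vertex it forms the \emph{ear}, the triangle bounded by the two adjacent sides and the segment joining their far endpoints, sets $\theta_P$ to be the minimum over all ear angles, and shows that a chord leaving a side at angle $\le\theta_P$ must stay inside an ear and exit through the adjacent side; comparing, via Gauss--Bonnet, the areas of the ear and of the smaller triangle cut off by the chord (they share the vertex angle) forces the chord's second angle to be at least $\theta_P$. That version buys an explicit and in fact optimal constant (the inner sides of the ears realize it), whereas your compactness argument produces an unspecified $\theta_P>0$, which is all that is used later in the paper. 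Your proof's one delicate point, chords degenerating to a vertex, is handled correctly by the Gauss--Bonnet angle-sum on the shrinking triangle, which is essentially the paper's ear comparison performed in the limit; and you are right to flag boundedness of $P$ explicitly, since both arguments need it (for an unbounded convex region such as the strip between two ultraparallel geodesics the statement is false), even though the lemma's wording leaves it implicit. Two cosmetic remarks: in your nondegenerate case the ``or'' branch is redundant, since $c^*\subset s_i\cap s_j$ already contradicts nondegeneracy for distinct sides; and saying the common-side case makes the hypothesis ``vacuous'' is loose, the point being rather that such chords are excluded from consideration (or satisfy the conclusion trivially at the endpoint lying on the other side).
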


\begin{proof}
The statement follows by a compactness argument since a geodesic segment cannot have angle $0$ in both endpoints, but there is a direct argument that also shows that this minimal angle corresponds to a specific geometric quantity which we now describe.

Consider the set of all geodesic segments that form a triangle with consecutive sides of $P$. We'll call these triangles the {\it ears} of $P$. Consider the set of angles of the ears (three angles for each ear) and set $\theta_P$ to be their minimum value. 

\begin{figure}[h]
{\color{linkred}
\leavevmode \SetLabels
\L(0.37*.71) $s$\\
\L(0.553*.81) $\tilde{s}$\\
\L(0.52*.62) $T$\\
\L(0.47*.81) $T'$\\
\endSetLabels
\begin{center}
\AffixLabels{\centerline{\epsfig{file =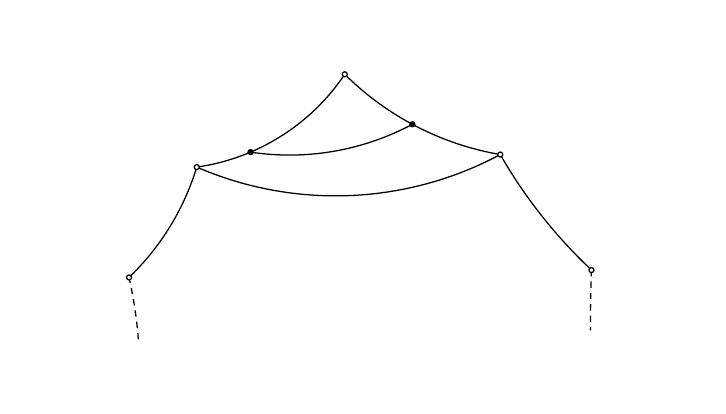,width=7.0cm,angle=0} }}
\vspace{-30pt}
\end{center}
\caption{The ear $T$ containing $T'$} \label{fig:polygonear}
}
\end{figure}

Now consider a geodesic segment $c$ leaving from a side $s$ of $\partial P$ forming an angle $\theta$ of at most $\theta_P$. It stays entirely in an ear $T$ (one of the sides of the ear is $s$) so it intersects a side $\tilde{s}$ of $P$ adjacent to $s$. The triangle $T'$ formed by $c$ with segments of $s$ and $\tilde{s}$ is contained in $T$ and is thus of lesser area. As it also shares an angle with $T$, the sum of its two remaining angles cannot be strictly less than $2 \theta_P$, otherwise by Gauss-Bonnet the area of $T'$ would be greater than that of $T$. As $\theta \leq \theta_P$, the remaining angle is at least $\theta_P$.

Note that $\theta_P$ is optimal as the inner sides of the ears are admissible geodesic segments. 
\end{proof}

The following is just an observation about geodesics in $\Hyp$. The proof we give, and many of the following proofs, use hyperbolic trigonometry. We refer the reader to \cite{BuserBook} or any other standard hyperbolic geometry textbook for the formulas.

\begin{lemma}\label{lem:angle}
Let $\frac{\pi}{2} \geq \theta_0>0$, and set
$$m(\theta_0):= \arccosh\left( \frac{2}{\sin^2\left(\theta_0\right)} - 1\right)$$
If $c$ is a geodesic segment in $\Hyp$ of length at least $m(\theta_0)$ between two (complete) geodesics $\gamma_1,\gamma_2$ such that 
$$
\angle(c,\gamma_i) \geq \theta_0
$$
for $i=1,2$, then $\gamma_1$ and $\gamma_2$ are disjoint. 
\end{lemma}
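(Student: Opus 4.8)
The plan is to set up the standard trigonometric picture and compute. Consider the geodesic segment $c$ with endpoints $p_1 \in \gamma_1$ and $p_2 \in \gamma_2$, with $\length(c) = \ell \geq m(\theta_0)$ and $\angle(c,\gamma_i) = \alpha_i \geq \theta_0$. To show $\gamma_1$ and $\gamma_2$ are disjoint, I would argue by contradiction: suppose they meet at a point $q$ (or are asymptotic, a limiting case). Then $q$, $p_1$, $p_2$ span a hyperbolic triangle (possibly with an ideal vertex) whose side $p_1p_2$ has length $\ell$, and whose angles at $p_1$ and $p_2$ are $\pi - \alpha_1$ and $\pi - \alpha_2$ — or $\alpha_1, \alpha_2$ depending on which side $q$ lies; the key point is that at least one of the two sub-configurations forces angles that are too small to be compatible with a long side.

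The cleaner route is the following. By the angle hypothesis, drop a perpendicular or directly use the right-angled trigonometry: the worst case for disjointness is when both angles equal exactly $\theta_0$, since increasing either angle only pushes $\gamma_1,\gamma_2$ further apart. So I would reduce to the symmetric configuration where $\angle(c,\gamma_1) = \angle(c,\gamma_2) = \theta_0$ and the picture is symmetric about the perpendicular bisector of $c$. Cutting along this bisector gives a right-angled triangle (or rather a Lambert-type quadrilateral / trirectangle) with one side of length $\ell/2$, one angle $\theta_0$, and a right angle at the foot of the bisector; the two copies of $\gamma_i$ meet the bisector line, and $\gamma_1, \gamma_2$ fail to intersect precisely when the relevant configuration "opens up". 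Applying the appropriate hyperbolic law — here the relation governing a right-angled triangle with one acute angle $\theta_0$ and adjacent side $\ell/2$, or equivalently the formula $\cosh(\text{distance}) = \dots$ for the common perpendicular — I expect the threshold $\cosh \ell = \frac{2}{\sin^2\theta_0} - 1$ to drop out exactly, which is why $m(\theta_0)$ is defined by $\arccosh$ of that quantity. Concretely, one checks that when $\length(c) = m(\theta_0)$ the two geodesics are asymptotic, and for strictly longer $c$ they admit a common perpendicular, hence are disjoint.

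The one subtlety to handle carefully is the monotonicity claim — that the extremal (hardest) case really is $\alpha_1 = \alpha_2 = \theta_0$ — and the fact that angles are taken to be the \emph{minimal} angle (always $\leq \pi/2$), so one must make sure the orientation of the segment relative to each $\gamma_i$ is chosen consistently so that the triangle inequality for angles works in the intended direction. I would dispatch this by noting that if $\gamma_1 \cap \gamma_2 \neq \emptyset$, then in the triangle formed by $c$ and the two rays of $\gamma_1,\gamma_2$ toward the intersection point, Gauss–Bonnet forces the angle sum to be less than $\pi$, so the two base angles sum to less than $\pi - (\text{angle at } q) < \pi$; combined with each being $\geq \theta_0$ and a direct trigonometric estimate on the side opposite... this gives an upper bound on $\length(c)$ strictly below $m(\theta_0)$, contradiction.

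The main obstacle is bookkeeping: getting the right hyperbolic trigonometric identity for the (possibly ideal) triangle and verifying the arithmetic simplifies to exactly $\frac{2}{\sin^2\theta_0} - 1$, together with correctly treating the asymptotic/ideal-vertex boundary case as the equality case. None of the individual steps is deep; the content is entirely in choosing the right degenerate configuration and applying the standard formula from \cite{BuserBook}.
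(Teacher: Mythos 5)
Your proposal is correct and follows essentially the same route as the paper: reduce to the worst case where both angles equal $\theta_0$, identify the critical configuration as the triangle with an ideal vertex and two angles $\theta_0$ over the side $c$ (whose hyperbolic trigonometry gives exactly $\cosh\ell = \frac{2}{\sin^2\theta_0}-1$, i.e.\ $m(\theta_0)$), and conclude by monotonicity in $\ell$ and in the angles. Your symmetric halving into a right triangle/trirectangle, giving $\cosh(\ell/2)=1/\sin\theta_0$, is only a cosmetic variation on the paper's direct computation.
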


\begin{proof}
Fix $\theta_0$. To check whether segments of given length $\ell$ with the angle condition always lie give rise to disjoint geodesics, we can consider the "worst case scenario" which is when both geodesics form an angle of $\theta_0$ with the segment. The limit case in this worst case scenario between intersecting and not intersecting is when the two geodesics $\gamma_1$ and $\gamma_2$ are ultra-parallel and in this case we have a triangle with an ideal vertex and two angles of $\theta_0$ in endpoints of a segment $c$ of length $\ell$. In this case $\ell$ is easy to compute via standard hyperbolic trigonometry: it is 
$$
m(\theta_0)=\arccosh\left( \frac{2}{\sin^2\left(\theta_0\right)} - 1\right)
$$
The distance between $\gamma_1$ and $\gamma_2$ being an increasing function of $\ell$, we can conclude that any $c$ of length greater or equal to $m(\theta_0)$ with the angle condition will lie between disjoint geodesics. 
\end{proof}
Note that the proof also shows that the condition is sharp. The next lemma is more technical and contains several quantifiers, but is again a somewhat elementary statement about geodesics in $\Hyp$.

\begin{lemma}\label{lem:tech}
Let $\frac{\pi}{2} \geq \theta_0 >0$ be a fixed constant. Let $c$ be a geodesic segment in $\Hyp$ and $\gamma$ the complete geodesic containing $c$. Fix $\varepsilon>0$ and let $\gamma_1$ and $\gamma_2$ be geodesics that intersect $\gamma$ such that the intersection points $p_1$, $p_2$ lie on different sides of $c$.

Suppose for $i=1,2$ that $
\angle (\gamma_i,\gamma) \geq \theta_0$
and
$$
d(c,p_i) \geq \log\left(\frac{1}{\varepsilon}\right) +  \log\left(\frac{4}{\sin(\theta_0)}\right)
$$
Then any geodesic $\delta$ intersecting both $\gamma_1$ and $\gamma_2$ satisfies
$$
c \subset B_\varepsilon(\delta)
$$
\end{lemma}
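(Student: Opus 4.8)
The plan is to work in the upper half-plane and use the fact that a geodesic $\delta$ meeting both $\gamma_1$ and $\gamma_2$ is pinned between them, so it cannot stray far from the portion of $\gamma$ lying between $p_1$ and $p_2$, which contains $c$. First I would set up coordinates so that $\gamma$ is the imaginary axis and $c$ is a symmetric segment on it, say with endpoints at heights $e^{\pm L/2}$ where $L = \ell(c)$; the hypothesis then says that $p_1$ sits at height $\geq e^{L/2}\cdot\frac{1}{\varepsilon}\cdot\frac{4}{\sin\theta_0}$ and $p_2$ at height $\leq e^{-L/2}\cdot\varepsilon\cdot\frac{\sin\theta_0}{4}$ (after possibly relabeling which side is which). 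The key geometric point is that the angle condition $\angle(\gamma_i,\gamma)\geq\theta_0$ forces the geodesic $\gamma_i$ (a semicircle through $p_i$) to have Euclidean radius comparable to the height of $p_i$: more precisely, a semicircle of Euclidean radius $r$ centered on the real axis meets the imaginary axis at height $h$ with $\sin(\text{angle}) = h/r$... wait, I need to be careful — if $\gamma_i$ is a semicircle meeting the imaginary axis at height $h_i$, the angle it makes there is $\arcsin(h_i/r_i)$ if the center-to-height relation is $r_i^2 = h_i^2 + (\text{center})^2$, giving angle $\arccos(|\text{center}|/r_i)$; the angle being $\geq\theta_0$ bounds $|\text{center}|\leq r_i\cos\theta_0$, hence $r_i \geq h_i/\sin\theta_0$... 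I'd nail down the exact inequality but the upshot is that $\gamma_1$ stays, near $c$, to the far-upper side and $\gamma_2$ to the far-lower side.

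The main step is then: any geodesic $\delta$ crossing both $\gamma_1$ and $\gamma_2$ must cross the region of the half-plane lying ``between'' them, and in particular its endpoints on $\mathbb{R}\cup\{\infty\}$ (or rather the semicircle $\delta$ itself) must pass close to $c$. I would argue this by a separation/continuity argument: $\gamma_1$ and $\gamma_2$ are disjoint (this is where Lemma \ref{lem:angle} enters, once we check $d(c,p_1)+d(c,p_2) \geq 2\log(1/\varepsilon)+2\log(4/\sin\theta_0)$ is at least $m(\theta_0)$ — for small $\varepsilon$ this is clear, and one may need to also use the angle hypotheses at the $p_i$ directly), so they bound a region $R$ containing the sub-segment of $\gamma$ between $p_1$ and $p_2$; a geodesic $\delta$ meeting both must enter $R$, and the part of $R$ ``at the level of $c$'' is thin. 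Concretely, at each height $h\in[e^{-L/2},e^{L/2}]$ the horizontal cross-section of $R$ has Euclidean width at most something like $2r_1$ on one side... — actually the cleaner route: project everything to the geodesic $\gamma$. The nearest-point projection to $\gamma$ is distance-nonincreasing, $\gamma_1$ projects into a bounded subinterval of $\gamma$ near $p_1$ (of length controlled by $\theta_0$, independent of $\varepsilon$), similarly $\gamma_2$ near $p_2$, and $\delta$ meeting both has projection containing everything in between, hence containing $c$; then I'd bound how far $\delta$ can be from its own projection over the segment $c$, using that $\delta$'s entry and exit points (on $\gamma_1,\gamma_2$) are at distance $\geq\log(1/\varepsilon)+\log(4/\sin\theta_0) - O(1)$ from $c$ along $\gamma$, and a geodesic in $\Hyp$ that passes at parameter-distance $\geq D$ (measured via projection) on both sides of a point stays within distance $\approx 2e^{-D}$ of $\gamma$ there — this is the exponential-convergence estimate for geodesics in $\Hyp$, and it is exactly why the $\log(1/\varepsilon)$ appears.

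The hard part will be bookkeeping the constants so that the exponential estimate ``geodesic fellow-travels $\gamma$ within $\varepsilon$ over $c$'' comes out with precisely the stated threshold $\log(1/\varepsilon)+\log(4/\sin\theta_0)$: one must account for (a) the $\theta_0$-dependent but $\varepsilon$-independent length by which $\delta$'s crossing points with $\gamma_i$ may sit ``before'' $p_i$ as seen from $\gamma$'s projection, which is where the $\frac{4}{\sin\theta_0}$ factor is absorbed, and (b) the two-sided nature of the estimate (we get $e^{-D}$ from each side and they add). I would handle this with a single explicit hyperbolic-trigonometry computation in the worst case — $\delta$ tangent-like to $\gamma_1$ and $\gamma_2$ at the extreme allowed positions, all angles equal to $\theta_0$ — mirroring the style of the proofs of Lemmas \ref{lem:angle} and \ref{lem:tech}'s predecessors, and then note monotonicity in the remaining parameters to conclude the general case. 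A secondary subtlety is the degenerate configuration where $\delta$ meets $\gamma_1$ or $\gamma_2$ at infinity or where $\gamma_1,\gamma_2$ are asymptotic rather than ultraparallel; these are limiting cases of the generic picture and I would either treat them by continuity or observe they only make $\delta$ closer to $c$.
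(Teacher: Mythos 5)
Your proposal ends by falling back on exactly the paper's method: reduce to the symmetric worst case ($\delta$ ultraparallel to $\gamma_1,\gamma_2$, both angles equal to $\theta_0$, both distances equal to $r_\varepsilon=\log(1/\varepsilon)+\log(4/\sin\theta_0)$) and do one explicit trigonometric computation. In the paper this is carried out via the common perpendicular $\mu$ of $\gamma_1,\gamma_2$ and two symmetric quadrilaterals: an elementary comparison gives $h<2h'$ for the maximal distance $h$ from $c$ to $\delta$, and then $\sinh(h')=\frac{1}{\sin\theta_0}\,\frac{\cosh(\ell(c)/2)}{\cosh(r_\varepsilon+\ell(c)/2)}$, which is at most $\sinh(\varepsilon/2)$ precisely because $r_\varepsilon\geq\log(1/\varepsilon)+\log(4/\sin\theta_0)$. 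Your alternative projection route is also viable and would produce a bound of the same shape: the nearest-point projection of $\gamma_i$ onto $\gamma$ is an interval about $p_i$ of half-width $\mathrm{arctanh}(\cos\theta_0)=\log\frac{1+\cos\theta_0}{\sin\theta_0}\leq\log\frac{2}{\sin\theta_0}$, so the projection of $\delta$ contains, about each point $p$ of $c$, an interval of radius $D\geq\log(2/\varepsilon)$, and a short half-plane computation (with $\gamma$ the imaginary axis and $\delta$ a semicircle with feet projecting to the ends of that interval) gives $d(p,\delta)\leq\mathrm{arctanh}\bigl(e^{-D_1}+e^{-D_2}\bigr)$ in terms of the projection-distances to the two ends. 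Your worry about invoking Lemma \ref{lem:angle} is unfounded: the segment of $\gamma$ between $p_1,p_2$ has length at least $2\log(4/\sin\theta_0)\geq m(\theta_0)$ for every $\theta_0$, so $\gamma_1,\gamma_2$ are indeed disjoint.

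The genuine shortfall is that neither computation is actually performed, and the computation \emph{is} the lemma: the qualitative statement that $\delta$ cannot stray far from $c$ is easy, and the whole content is that the specific threshold $\log(1/\varepsilon)+\log(4/\sin\theta_0)$ suffices. You explicitly defer this as ``the hard part'' of bookkeeping constants, so the stated bound is never verified. Two of the steps you lean on are also asserted rather than proved: the fellow-travelling estimate (``within $\approx 2e^{-D}$ of $\gamma$'') needs the half-plane computation above, and as literally stated it yields $d(p,\delta)\leq\mathrm{arctanh}(2e^{-D})$, which with the crude bound $D\geq\log(2/\varepsilon)$ is slightly \emph{larger} than $\varepsilon$; recovering the constant $4/\sin\theta_0$ requires spending the slack $\log\frac{2}{1+\cos\theta_0}$ in the projection width (and even then some care when $\theta_0$ is small), or else doing the honest worst-case computation. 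Likewise the reduction ``the extremal $\delta$ is ultraparallel to both $\gamma_i$, with angles exactly $\theta_0$ and distances exactly $r_\varepsilon$'' is a monotonicity claim that needs at least a brief argument (the paper states it too, but then does the closed-form estimate that makes everything checkable). So the plan is sound and matches the paper's strategy, but as written it is a proof outline with the quantitative heart missing.
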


\begin{proof}

Given $\theta_0$ and the length of $c$, we'll consider the "worst case scenario" by which we mean the situation, under the assumptions of the lemma, where a geodesic with endpoints on $\gamma_1$ and $\gamma_2$ is as far away as possible from $c$. 

In order for it to be the worst case scenario a number of things need to happen. First of all, given any setup for $c$ and $\gamma_1$ and $\gamma_2$ the geodesic furthest away from $c$ with endpoints on $\gamma_1$ and $\gamma_2$ is a limit case and actually is a geodesic $\delta$ ultra-parallel to both $\gamma_1$ and $\gamma_2$. Furthermore, decreasing the distance between $c$ and $p_i$ pushes geodesics away, so we can suppose that 
$$
d(c,p_i) = \log\left(\frac{1}{\varepsilon}\right) + \log\left(\frac{4}{\sin(\theta_0)}\right) =:r_\varepsilon
$$
Finally, if one of the angles is greater than $\theta_0$ then decreasing the angle also pushes $c$ away from the limit case so we can suppose that both angles of intersection are exactly $\theta_0$. We can thus suppose that we are in the situation of Figure \ref{fig:wcs}. 

\begin{figure}[h]
{\color{linkred}
\leavevmode \SetLabels
\L(0.267*.89) $\theta_0$\\
\L(0.70*.89) $\theta_0$\\
\L(0.484*.94) $c$\\
\L(0.62*.225) $\delta$\\
\L(0.475*.55) $d$\\
\L(0.24*.49) $\gamma_1$\\
\L(0.725*.49) $\gamma_2$\\
\L(0.6*.705) $\mu$\\
\L(0.22*.96) $p_1$\\
\L(0.746*.96) $p_2$\\
\endSetLabels
\begin{center}
\AffixLabels{\centerline{\epsfig{file =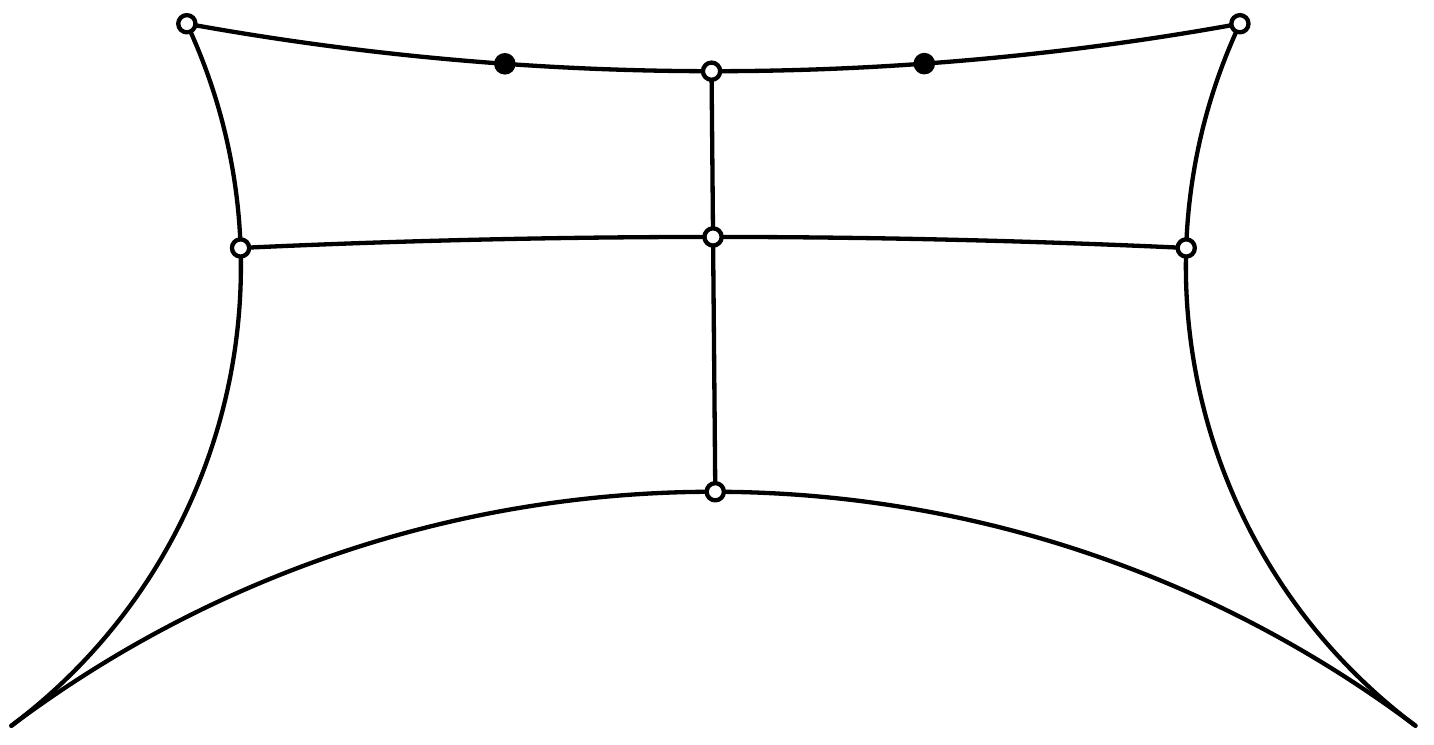,width=10.0cm,angle=0} }}
\vspace{-30pt}
\end{center}
\caption{The worst case scenario} \label{fig:wcs}
}
\end{figure}

This situation has a lot of symmetry which we'll now use. Let $\mu$ be the minimal geodesic path between $\gamma_1$ and $\gamma_2$ and let $d$ be the the distance between $\gamma$ and $\delta$. We denote by $h$ the maximal distance between $c$ and $\delta$. It is this quantity that we need to bound in function of the other parameters. It lies in a symmetric quadrilateral $Q$ with $c$ as one of its sides

We denote by $d''$ the distance between $\gamma$ and $\mu$ and by $d'$ the distance between $\mu$ and $\delta$. By looking at the quadrilaterals separated by $\mu$, observe that $d'>d''$ and thus that $d' > \frac{d}{2}$. We consider another symmetric quadrilateral $Q'$, somewhat similar to $Q$ but this time with the height of length $d$ replaced with a height of length $d'$ (see Figure \ref{fig:doublequad}). 

\begin{figure}[h]
{\color{linkred}
\leavevmode \SetLabels
\L(0.185*.70) $h$\\
\L(0.125*.3) $h'$\\
\L(0.475*.24) $d'$\\
\L(0.51*.60) $d''$\\
\L(0.342*.845) $\frac{\ell(c)}{2}$\\
\L(0.32*.57) $\frac{\ell(c)}{2}$\\
\endSetLabels
\begin{center}
\AffixLabels{\centerline{\epsfig{file =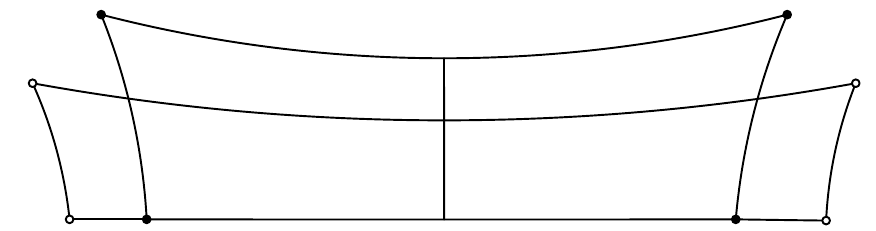,width=12.0cm,angle=0} }}
\vspace{-30pt}
\end{center}
\caption{The quadrilaterals $Q$ and $Q'$} \label{fig:doublequad}
}
\end{figure}

We denote by $h'$ the length of the side corresponding to $h$ and we claim that in fact $h'$ enjoys the property of $h' > \frac{h}{2}$. This will be useful because $h'$ is somewhat easier to compute. 

To prove this, using the left right symmetries of $Q$ and $Q'$, we first restrict ourselves to the left half of each quadrilateral (see Figure \ref{fig:doublequad}).

Using hyperbolic trigonometry, we have 
$$
\sinh(h')= \sinh(d') \cosh\left( \frac{\ell(c)}{2}\right) > \sinh\left( \frac{d}{2} \right)
$$
whereas
$$
\sinh(h)= \sinh(d) \cosh\left( \frac{\ell(c)}{2}\right) 
$$
Thus
$$
\frac{\sinh(h')}{\sinh(h)} > \frac{\sinh\left( \frac{d}{2} \right)}{\sinh(d)} = \frac{1}{2 \cosh\left( \frac{d}{2} \right)}
$$
But if $h' \leq \frac{h}{2}$ we would have 
$$
\frac{\sinh(h')}{\sinh(h)} \leq \frac{\sinh\left( \frac{h}{2} \right)}{\sinh(h)} = \frac{1}{2 \cosh\left( \frac{d}{2} \right)}
$$
a contradiction showing $h' > \frac{h}{2}$.

We now seek to bound $h'$ and to do so, we begin by computing $\ell(\mu)$. Using for instance the upper left quadrilateral of Figure \ref{fig:wcs} and hyperbolic trigonometry, we have
$$
\cosh\left(\frac{\ell{\mu}}{2}\right) = \cosh(r_\varepsilon + \sfrac{\ell(c)}{2}) \sin(\theta+0)
$$
Using the lower left quadrilateral we have
$$
\sinh(d') = \frac{1}{\sinh\left(\frac{\ell{\mu}}{2}\right)}
$$
We now return to the quadrilateral pictured in Figure \ref{fig:quadprime} to compute $h'$.

\begin{figure}[h]
{\color{linkred}
\leavevmode \SetLabels
\L(0.322*.52) $h'$\\
\L(0.67*.40) $d'$\\
\L(0.48*.83) $\frac{\ell(c)}{2}$\\
\endSetLabels
\begin{center}
\AffixLabels{\centerline{\epsfig{file =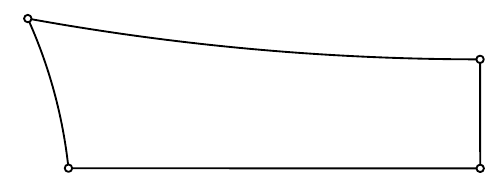,width=5.5cm,angle=0} }}
\vspace{-30pt}
\end{center}
\caption{The left half of $Q'$} \label{fig:quadprime}
}
\end{figure}

We have 
$$
\sin (\theta_0) \cosh(r_\varepsilon + \sfrac{\ell(c)}{2})  \sinh(h')= \cosh\left(\frac{\ell(c)}{2}\right)
$$
from which we deduce
$$
h' = \arcsinh\left( \frac{1}{\sin(\theta_0)} \frac{ \cosh\left(\frac{\ell(c)}{2}\right)}{\cosh\left(r_\varepsilon+ \frac{\ell(c)}{2}\right)}\right)
$$
We want to show that $h' \leq \frac{\varepsilon}{2}$ thus that 
\begin{equation}\label{eqn:main}
\frac{1}{\sin(\theta_0)} \frac{ \cosh\left(\frac{\ell(c)}{2}\right)}{\cosh\left(r_\varepsilon+ \frac{\ell(c)}{2}\right)} \leq  \sinh\left( \frac{\varepsilon}{2}\right)
\end{equation}
The left hand of Inequality \ref{eqn:main} can be manipulated to show that 
$$
\frac{1}{\sin(\theta_0)} \frac{ \cosh\left(\frac{\ell(c)}{2}\right)}{\cosh\left(r_\varepsilon+ \frac{\ell(c)}{2}\right)} = \frac{1}{\sin(\theta_0)} \frac{1 + e^{-\ell(c)}}{e^r+ e^{-r_\varepsilon-\ell(c)}}
$$
Note that 
$$
\frac{1 + e^{-\ell(c)}}{e^{r_\varepsilon}+ e^{-r_\varepsilon-\ell(c)}} < \frac{2}{e^{r_\varepsilon}}
$$
and thus Inequality \ref{eqn:main} certainly holds provided
$$
\frac{1}{\sin(\theta_0)} \frac{2}{e^{r_\varepsilon}} \leq \sinh\left( \frac{\varepsilon}{2}\right)
$$
and thus will hold if
$$
\frac{1}{\sin(\theta_0)} \frac{2}{e^{r_\varepsilon}} \leq  \frac{\varepsilon}{2}
$$
Expressed different this last inequality becomes
$$
r_\varepsilon \geq \log\left(\frac{1}{\varepsilon}\right) + \log\left(\frac{4}{\sin(\theta_0)}\right)
$$
As the above inequality is in fact an equality, this proves the lemma.
\end{proof}

We now can proceed to prove the main tool for our results.

\begin{theorem}\label{thm:maintech}
For any $X\in \M_g$, there exists a constant $K_X$ such that the following holds. For all  $ 1 > \varepsilon>0$ and any finite collection $\{c_i\}_{i=1}^N$ of geodesic segments of average length $\overline{c}$ on $X$, there exists a closed geodesic $\gamma$ such that
$$
\ell(\gamma) \leq N \left( K_X + \overline{c} + 2 \log\left(\frac{1}{\varepsilon}\right) \right)
$$
and for all $i=1,\hdots,N$
$$
c_i \subset B_\varepsilon(\gamma)
$$
\end{theorem}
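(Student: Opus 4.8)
The plan is to turn each segment $c_i$ into a long geodesic segment $\hat c_i$ which is crossed, far out on either side of $c_i$, by a pair of ``guard'' geodesics meeting it at a definite angle. By Lemma~\ref{lem:tech}, \emph{any} geodesic crossing both guards of $c_i$ carries $c_i$ in its $\varepsilon$-neighbourhood, so it then suffices to thread one short closed geodesic through all of these guards. The first thing I would do is fix, depending only on $X$, a finite connected graph $G\subset X$ made of geodesic arcs such that $X\setminus G$ is a union of convex hyperbolic polygons, such that consecutive arcs of $G$ meet at angle at least some $\eta_X>0$, and such that every edge of $G$ is long (this last requirement costs nothing and is only used at the end). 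Lemma~\ref{lem:poly} applied to the polygons of $X\setminus G$ produces a constant $\theta_0=\theta_0(X)>0$, and since those polygons are convex and two distinct complete geodesics of $\Hyp$ meet at most once, there is a constant $\Delta_X$ so that any geodesic segment on $X$ of length $\Delta_X$ crosses $G$ transversally at angle at least $\theta_0$.

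Next I set $r_\varepsilon:=\log\!\left(\tfrac1\varepsilon\right)+\log\!\left(\tfrac{4}{\sin\theta_0}\right)$ and fix a large auxiliary constant $R_X$. For each $i$, let $\gamma_i$ be the complete geodesic through $c_i$; starting from each endpoint of $c_i$ I walk along $\gamma_i$ a distance $r_\varepsilon$, then at most $\Delta_X$ further to the first point $p_i^{\pm}$ where $\gamma_i$ crosses $G$ at angle $\ge\theta_0$, then a further $R_X$, then again at most $\Delta_X$ to the next such crossing $q_i^{\pm}$. Let $\hat c_i\subset\gamma_i$ be the segment from $q_i^-$ to $q_i^+$, and let $\gamma_i^{\pm}$ be the complete geodesics underlying the arcs of $G$ crossed at $p_i^{\pm}$. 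Because $\gamma_i$ is a geodesic, the nearest point of $c_i$ to $p_i^\pm$ is an endpoint of $c_i$, so $d(c_i,p_i^{\pm})\in[r_\varepsilon,r_\varepsilon+\Delta_X]$ and $p_i^+,p_i^-$ lie on opposite sides of $c_i$; hence Lemma~\ref{lem:tech}, applied in $\Hyp$ to lifts, guarantees that any geodesic crossing both $\gamma_i^+$ and $\gamma_i^-$ contains $c_i$ in its $\varepsilon$-neighbourhood. Note $\ell(\hat c_i)\le\ell(c_i)+2(r_\varepsilon+2\Delta_X+R_X)$.

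Now I close up. All the $q_i^{\pm}$ lie on $G$, so I join $q_i^+$ to $q_{i+1}^-$ (indices mod $N$) by a path $e_i$ inside $G$ of length at most $\diam(G)$, forming a closed curve $\Gamma=\hat c_1 e_1\hat c_2 e_2\cdots\hat c_N e_N$; composing with a bounded detour if needed, I may assume $[\Gamma]\neq 1$ (the surface group is torsion free). Let $\gamma$ be the closed geodesic freely homotopic to $\Gamma$, so $\ell(\gamma)\le\ell(\Gamma)$; summing the previous estimates and using $2r_\varepsilon=2\log(1/\varepsilon)+2\log(4/\sin\theta_0)$ gives
$$
\ell(\gamma)\ \le\ N\,\overline{c}\ +\ 2N\log\!\left(\tfrac1\varepsilon\right)\ +\ N\!\left(2\log\tfrac{4}{\sin\theta_0}+4\Delta_X+2R_X+\diam(G)+O_X(1)\right),
$$
which is of the claimed form $N(K_X+\overline{c}+2\log(1/\varepsilon))$ with $K_X$ depending only on $X$. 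The remaining point is that $\gamma$ really does cross $\gamma_i^+$ and $\gamma_i^-$ for every $i$: by construction all geodesic pieces of $\Gamma$ are long and all turning angles are at most $\pi-\min(\eta_X,\theta_0)$, so $\Gamma$ is a quasigeodesic with constants depending only on $X$, whence by the Morse lemma $\gamma$ fellow-travels $\Gamma$ at some distance $M_X=M_X(X)$. Taking $R_X$ large compared to $M_X$ and $\theta_0$, the segment $\hat c_i$ reaches distance $>M_X$ from $\gamma_i^+$ on both of its sides, and likewise for $\gamma_i^-$, which forces $\gamma$ to cross each guard; combined with the previous paragraph, $c_i\subset B_\varepsilon(\gamma)$ for all $i$.

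I expect the main obstacle to be exactly this last step: ensuring that tightening the auxiliary curve $\Gamma$ to a genuine closed geodesic destroys none of the crossings with the guard geodesics. This is the reason the connecting arcs are routed inside the fixed graph $G$ (to keep every turning angle away from $\pi$), the reason the edges of $G$ and the overshoot distance $R_X$ are taken large (to make $\Gamma$ a uniform quasigeodesic and to make the crossings survive the $M_X$-fellow-travelling), and it is what pins down the per-segment budget with coefficient exactly $2$ in front of $\log(1/\varepsilon)$. Everything else — the length bookkeeping, the existence of $\theta_0$ and $\Delta_X$, and the application of Lemma~\ref{lem:tech} — is routine once this structure is in place.
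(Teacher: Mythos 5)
Your overall scheme is the same as the paper's (extend each $c_i$ past two ``guard'' crossings made at angle $\geq\theta_0$ and distance $\geq r_\varepsilon$, concatenate, pass to the geodesic representative, show it still crosses the guards, and finish with Lemma~\ref{lem:tech}), but the step you yourself flag as the main obstacle is exactly where your argument has a genuine gap. Your mechanism for surviving the tightening is ``$\Gamma$ is a uniform quasigeodesic, apply the Morse lemma,'' and this rests on two unproved claims. First, the existence of the graph $G$ with \emph{all edges long} is asserted to ``cost nothing,'' but it is not clear it can be arranged at all, and it is circular as stated: the edge length you need is $L(\min(\eta_X,\theta_0))$, while $\eta_X$ and $\theta_0$ are themselves determined by $G$; a filling geodesic graph on a fixed $X$ has whatever (possibly very short) edges it has, and there is no obvious way to lengthen them while staying geodesic and filling. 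Second, even granting such a $G$, your connectors $e_i$ start and end at points $q_i^{\pm}$ in the interiors of edges, so they contain partial edges that can be arbitrarily short; two consecutive corners separated by a short piece, each with angle only bounded below by $\min(\eta_X,\theta_0)$, can turn the path by nearly $2\pi-2\min(\eta_X,\theta_0)>\pi$, i.e.\ $\hat c_{i+1}$ can head straight back alongside $\hat c_i$. A concatenation of long geodesics joined by bounded-length connectors with angles merely bounded below is \emph{not} in general a quasigeodesic with uniform constants, so the Morse constant $M_X$, the fellow-travelling, the crossing of the guards, and even the nontriviality of $[\Gamma]$ are all left unjustified. (The parts that do work --- the existence of $\theta_0$ and $\Delta_X$, the $r_\varepsilon$ bookkeeping with coefficient $2\log(1/\varepsilon)$, the depth estimate showing $\hat c_i$ penetrates far on both sides of each guard once $R_X$ is large, and the reduction to Lemma~\ref{lem:tech} --- are fine and parallel the paper.)

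The paper avoids quasigeodesic constants entirely by a combinatorial device you are missing: it uses a single filling closed geodesic $\gamma_0$, routes the connectors along $\gamma_0$ itself, and inserts auxiliary arcs $\mu_{\pm}$ (orthogonal to $\gamma_0$ at both endpoints, leaving and returning on a prescribed side) precisely so that consecutive non-$\gamma_0$ pieces of the closed curve leave $\gamma_0$ from \emph{opposite} sides. In the universal cover the lifts of $\gamma_0$ traversed by the curve are then pairwise disjoint (this is why the extended segments are also padded to length $m(\theta_0)$, so Lemma~\ref{lem:angle} applies) and nested; nestedness immediately gives that the lift is simple with two distinct endpoints at infinity, hence $\gamma$ is nontrivial, and that the geodesic with those endpoints crosses every one of the nested lifts, in particular both guards of each $c_i$. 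To repair your proof you would either need to prove a genuine quasigeodesicity statement for your $\Gamma$ (which seems to require controlling backtracking by hand anyway) or to build an anti-backtracking mechanism of the paper's type into your connectors.
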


\begin{proof}
We take a filling closed geodesic $\gamma_0$ on $X$ of minimal length (among filling geodesics). Thus $X\setminus \gamma_0$ consists in a finite collection of polygons $\{P_i\}_{i\in I}$ and we denote by $D$ the maximum their intrinsic diameters. Furthermore we denote by $\theta_0$ the minimum of $\{\theta_{P_i}\}_{i\in I}$ where the $\theta_{P_i}$s are from Lemma \ref{lem:poly}. 

As $X$ is orientable, the geodesic $\gamma_0$ has two sides which we think of as being $+$ and $-$ (it does not matter which is which but we fix it). We take $\mu_{+}$, resp. $\mu_{-}$, to be a geodesic arc from $\gamma$ to itself, orthogonal $\gamma$ in both end points, and which leaves and returns to the $+$ side, resp. $-$ side. The existence of $\mu_{+}$ and $\mu_{-}$ might not be obvious but can be shown as follows. Take a cover $\tilde{X} \to X$ of finite index so that a lift $\tilde{\gamma}_0$ of $\gamma_0$ is simple. In $\tilde{X}$, it suffices to find orthogonal arcs that return to the same side by completing $\tilde{\gamma}_0$ into a pants decomposition and by taking appropriate orthogonal arcs in the pants containing $\tilde{\gamma}_0$. The image in $X$ of these arcs provides $\mu_{+}$ and $\mu_{-}$.)

We note that $\theta_0$, $D$, $\ell(\gamma_0)$, $\ell(\mu_{+})$ and $\ell(\mu_{-})$ are 
all quantities that depend only $X$. 

We'll begin by constructing a closed piecewise geodesic segment containing all of the $c_i$s with endpoints lying on $\gamma_0$. We begin by extending each $c_i$ to a segment $\tilde{c}_i$ with endpoints on $\gamma_0$ as follows. Following Lemma \ref{lem:tech} we set
$$
r_\varepsilon:= \log\left(\frac{1}{\varepsilon}\right) + \log\left(\frac{4}{\sin(\theta_0)}\right)
$$

We extend each $c_i$ by $r_\varepsilon$ in both directions. We continue to extend it until it intersects $\gamma_0$ at an angle $\theta\geq \theta_0$. As $\gamma_0$ fills, a first intersection point will occur within an extension of length at most $D$ on both ends. If the angle is too small in one of the endpoints, by extending by at most an additional $D$ there is a second intersection point. The arc between two successive intersection points is an arc with endpoints on one of the polygons of $X\setminus \gamma_0$. By Lemma \ref{lem:angle}, one of the two angles must be at least $\theta_0$. 

We'll also need to apply Lemma \ref{lem:angle} to the extended segments so if their current length is not yet $m(\theta_0)$ (where $m(\theta_0)$ is from Lemma \ref{lem:angle}), we extend it equally in both directions until it reaches that length. We denote the resulting segment $c'_i$. 

Note that because the quantities $D$, $\theta_0$ and $m(\theta_0)$ only depend on $X$ we have that the resulting geodesic arc $c'_i$ satisfies
$$
\ell(c'_i) \leq \ell(c_i) + 2\log\left(\frac{1}{\varepsilon}\right) + C_X
$$
where $C_X$ is a constant only depending on $X$. 

We think of the segments $c'_i$ as being cyclically ordered and we construct a closed curve at follows. 

We orient each $c_i$ arbitrarily. What will be important is on which sides of $\gamma_0$ they start and end on. 

To determine what happens between the endpoint of $c_i$ and the starting point of $c_{i+1}$ what will be important is on which sides of $\gamma_0$ they start and end on.

If $c'_{i+1}$ begins on the opposite side that $c'_i$ ends on, we join the endpoint of $c'_{i}$ to the starting point of $c'_{i+2}$ by the shortest subarc of $\gamma_0$ which does this and which will be of length at most $\frac{\gamma_0}{2}$. 

If $c'_{i+1}$ begins on the same side that $c'_i$ ends on, we'll use either $\mu_{+}$ or $\mu_{-}$ which we also think of as oriented. Suppose $c'_{i}$ ends on $+$ (the other case is symmetric). We join the endpoint of $c'_i$ to the starting point of $\mu_{-}$ by a shortest arc of $\gamma_0$ that does this and is of length at most $\frac{\gamma_0}{2}$. Now we join the endpoint of $\mu_{-}$ to the initial point of $c'_{i+1}$ similarly, i.e., by a shortest arc of $\gamma_0$ of length at most $\frac{\gamma_0}{2}$. All in all, the length of these three additional arcs is at most $\ell(\gamma_0) + \max( \ell(\mu_{+}), \ell(\mu_{-}))$. 

In this way, we have constructed a piecewise geodesic closed curve $\gamma'$ and we denote by $\gamma$ the unique geodesic in its homotopy class. Its length is thus upper bounded by
$$
\sum_{k=1}^N \ell(c'_k) + N\left(\ell(\gamma_0) + \max( \ell(\mu_{+}), \ell(\mu_{-}))\right)
$$
Putting this bound together with the bound on the length of each $c'_i$, we set
$$
K_X:= C_X + \ell(\gamma_0) + \max( \ell(\mu_{+}), \ell(\mu_{-}))
$$
to obtain the desired bound on the length of $\gamma$.

What remains to be seen is that $\gamma$ is non-trivial and contains all of the segments $c_i$ in its $\varepsilon$-neighborhood. To see this we consider a lift in the universal cover. 

Note that the piecewise geodesic $\gamma'$ consists of an even number of geodesic arcs where arcs of $\gamma_0$ are followed by either $c'_i$s or $\mu_{+}$ or $\mu_{-}$ (we'll refer to these as the non $\gamma_0$ arcs). 

We begin by taking a specific lift of $\gamma'$ (for instance by taking a lift of $c'_1$ and then constructing the full lift from there). Note that each lift of a $c'_i$ lies between two copies of $\gamma_0$ which are disjoint by the length condition imposed on $c'_i$ and by Lemma \ref{lem:angle}. As they are orthogonal in their endpoints, this also holds for lifts of $\mu_{+}$ and $\mu_{-}$. 

We think of the lift of $\gamma'$ as being oriented. We were careful about what sides the non $\gamma_0$ arcs left from and in particular we ensured that successive non $\gamma_0$ arcs left from different sides. In the lift, this means that the piecewise geodesic never backtracks and traverses each lift of $\gamma_0$ corresponding to the lift of a $\gamma_0$ arc. Thus the successive lifts of $\gamma_0$ form a sequence of nested geodesics.

\begin{figure}[h]
{\color{linkred}
\leavevmode \SetLabels
\endSetLabels
\begin{center}
\AffixLabels{\centerline{\epsfig{file =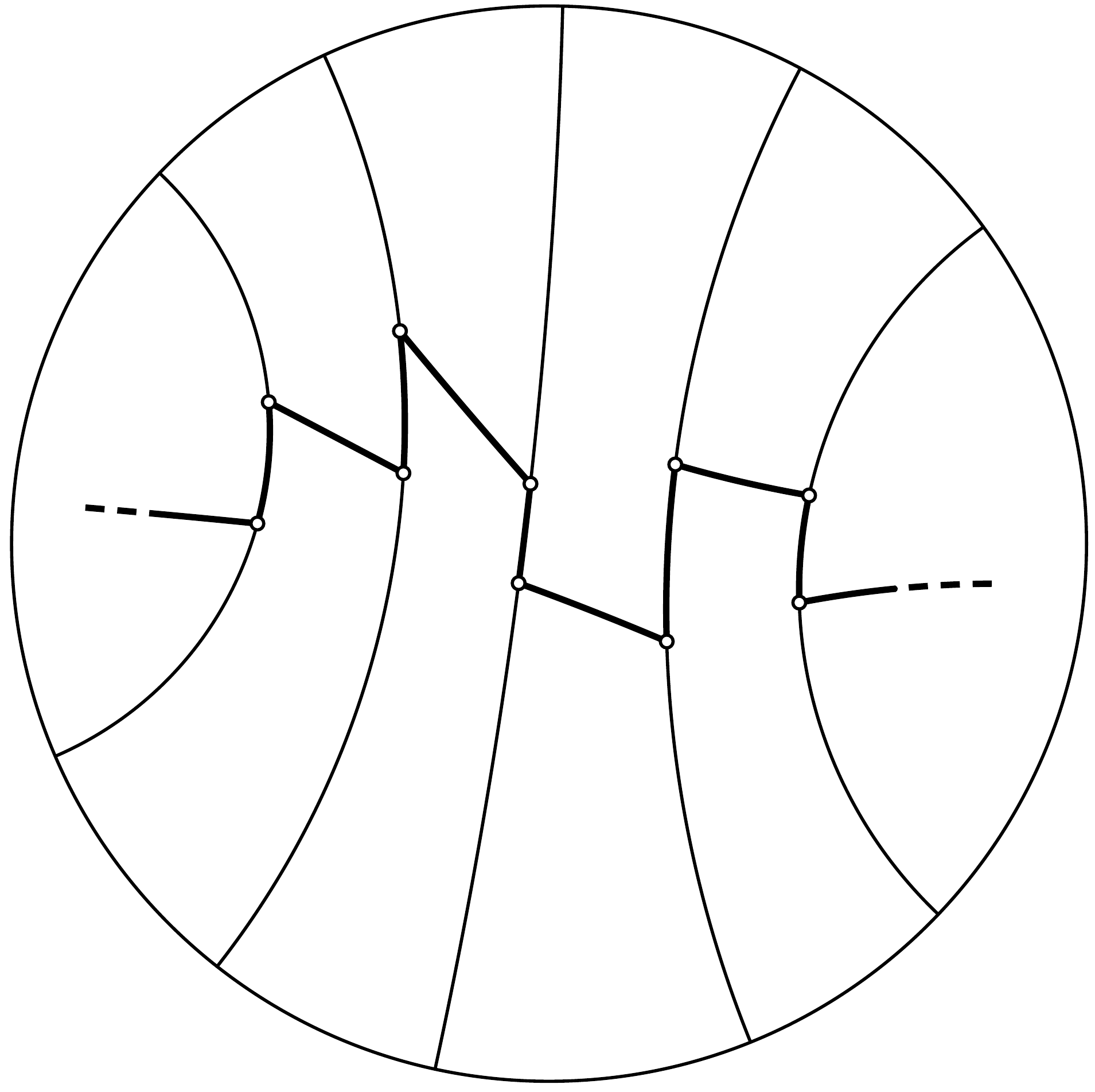,width=7.0cm,angle=0} }}
\vspace{-30pt}
\end{center}
\caption{Lifting $\gamma'$ to $\Hyp$: the complete geodesics are lifts of $\gamma_0$} \label{fig:lifts}
}
\end{figure}

That tells us two things: the lift of $\gamma'$ is simple and it limits in both directions to distinct endpoints on the boundary of $\Hyp$. We denote by $\tilde{\gamma}$ the unique geodesic in $\Hyp$ with these endpoints (this is a lift of $\gamma$). By standard arguments, this ensures that $\gamma'$, and thus $\gamma$, is non trivial.

We now look at $\tilde{\gamma}$. Given a lift of $c'_i$, we look at the two lifts of $\gamma_0$ surrounding it which we'll denote $\gamma'_i$ and $\gamma''_i$. By construction $c'_i$ has the proper length on both ends of the lift of $c_i$ and angle condition to ensure that any geodesic intersecting both $\gamma'_i$ and $\gamma''_i$ contains $c_i$ in a $\varepsilon$-neighborhood. By the above argument, $\tilde{\gamma}$ crosses both $\gamma'_i$ and $\gamma''_i$ and this is what we wanted to show. 
\end{proof}

\section{Quantitative density}

We now apply Theorem \ref{thm:maintech} to prove results about dense geodesics. Recall that a closed geodesic $\gamma$ is $\varepsilon$-dense on $X$ if $d_X(x,\gamma) \leq \varepsilon$ for all $x\in X$. 

\begin{reptheorem}{thm:denseX}
For all $X\in \M_g$ there exists a constant $C_X>0$ such that for all $\varepsilon \leq \frac{1}{2}$ there exists a closed geodesic $\gamma_\varepsilon$ that is $\varepsilon$-dense on $X$ and such that
$$
\ell(\gamma_\varepsilon) \leq C_X \frac{1}{\varepsilon}  \log\left(\frac{1}{\varepsilon}\right)
$$
\end{reptheorem}

\begin{proof}
The strategy is to find a collection of geodesic segments that $\frac{\varepsilon}{2}$-fill $X$ and then to apply Theorem \ref{thm:maintech} to find a closed geodesic that contains the segments in a $\frac{\varepsilon}{2}$-neighborhood. 

We begin by setting
$$R_X : = \min \{1, \injrad(X)\}$$
where $\injrad(X)$ is the minimum injectivity radius of $X$. We consider a maximal collection points $\{p_k\}_{k\in I}$ all pairwise distance at least $R_X$ ($I$ is a finite index set). Note that the balls of radius $R_X$ around the $p_k$ cover $X$. Further note that the balls of radius $ \frac{R_X}{2}$ around the $p_k$ are disjoint and thus 
$$
| I | \leq \frac{\area (X)}{\area\left(B_{\sfrac{R_X}{2}}\right)}
$$
In particular, there is a bound on $| I |$ which only depends on $X$. 
Now consider a ball of radius $R_X$ in $\Hyp$. As $R_X \leq 1$, the perimeter of this ball is less than
$$
2\pi \sinh(1)
$$
We want to fill it by geodesic segments such that all points are distance at most $\frac{\varepsilon}{2}$ from one of the segments. To do so we place points $q_k, k\in J$ on the perimeter, all exactly $\varepsilon$-apart except for possibly the last one which might be closer to its two neighbors. Note there are at most
$$
\frac{2\pi \sinh(1)}{\varepsilon}
$$
Now consider any collection of disjoint "parallel" segments constructed by first taking two neighbors and joining them by a geodesic segment, and then taking the geodesic between their other neighbors and so forth. There are half as many segments as there were points. Taking into account that we might have had an odd number of points, we've constructed at most 
$$
\frac{\pi \sinh(1)}{\varepsilon} + 1
$$
segments. Now any point in the disk lies between two geodesic segments and is at a distance at most $\frac{\varepsilon}{2}$ from one of them (this is because the distance between the endpoints of these segments is at most $\varepsilon$). 

Doing this for each of our balls, we have at most
$$
A_X \frac{1}{\varepsilon}
$$
segments where $A_X$ is a constant depending only on $X$. Each of them are length at most $2$. 

We now apply Theorem \ref{thm:maintech} to these segments asking that the closed geodesic $\gamma_\varepsilon$ contain every segment in its $\frac{\varepsilon}{2}$ neighborhood. The estimate follows.
\end{proof}

Using the same method as above, one can find an estimate on the length of a shortest closed geodesic that is $\varepsilon$-dense in the unit tangent bundle $UT(X)$ of $X$. For this one needs a notion of distance. One way of defining this is by associating oriented segments to vectors. Given $X$ and given a constant $l \leq \injrad(X)$, we say that a curve $\gamma$ is $\varepsilon$-dense in $UT(X)$ if every oriented geodesic segment on $X$ of length $l$ is contained in a $\varepsilon$-neighborhood of $\gamma$. 

\begin{theorem}\label{thm:tangent}
For all $X\in \M_g$ there exists a constant $C_X>0$ and an $\varepsilon_0$ such that for all $\varepsilon \leq \frac{1}{2}$ there exists a closed geodesic $\gamma_\varepsilon$ that is $\varepsilon$-dense in $UT(X)$ and such that
$$
\ell(\gamma_\varepsilon) \leq C_X \frac{1}{\varepsilon^2}  \log\left(\frac{1}{\varepsilon}\right)
$$
\end{theorem}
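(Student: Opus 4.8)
The plan is to mimic the proof of Theorem~\ref{thm:denseX}, but now producing a collection of \emph{oriented} geodesic segments that is $\varepsilon$-dense in $UT(X)$ in the sense just defined, and then to apply Theorem~\ref{thm:maintech}. The extra factor of $\frac{1}{\varepsilon}$ in the length bound will come from the fact that, at each point of a net in $X$, we now need to sample directions as well as positions, and there are roughly $\frac{1}{\varepsilon}$ directions to sample. Throughout we fix $l \leq \injrad(X)$ once and for all (so $l$ is a constant depending only on $X$); every segment we construct will have length comparable to $l$, hence bounded by a constant depending only on $X$, which is exactly the regime in which Theorem~\ref{thm:maintech} gives the estimate we want.

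First I would set $R_X := \min\{1,\injrad(X)\}$ and, exactly as in the proof of Theorem~\ref{thm:denseX}, choose a maximal $R_X$-separated net $\{p_k\}_{k\in I}$ in $X$, so that $|I|$ is bounded in terms of $X$ alone and the balls $B_{R_X}(p_k)$ cover $X$. The point is now the following local statement, to be proved in $\Hyp$: there is a finite collection of oriented geodesic segments, of cardinality at most $A_X \frac{1}{\varepsilon^2}$ and each of length at most $l$, such that every oriented geodesic segment of length $l$ whose midpoint lies in $B_{R_X/2}(p_k)$ is contained in the $\frac{\varepsilon}{2}$-neighborhood of one of them, \emph{with matching orientation} in the sense that the Hausdorff-type control is achieved by a sub-segment traversed in the same direction. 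To see this, take an $\frac{\varepsilon}{8}$-net of basepoints in $B_{R_X}(p_k)$ (there are $O(\frac{1}{\varepsilon^2})$ of them since we are in dimension $2$) and, at each basepoint, an $\frac{\varepsilon}{8l}$-net of unit tangent directions (there are $O(\frac{1}{\varepsilon})$ of them); for each (basepoint, direction) pair emit the oriented geodesic segment of length $l$. A standard estimate in $\Hyp$ — two geodesic segments of length $\le l$ starting $\frac{\varepsilon}{8}$-close in basepoint and $\frac{\varepsilon}{8l}$-close in direction stay within $\frac{\varepsilon}{2}$ of each other along their whole length, since $l \leq \injrad(X) \leq 1$ keeps all hyperbolic distortion factors bounded — shows that any length-$l$ oriented segment based in $B_{R_X}(p_k)$ is $\frac{\varepsilon}{2}$-shadowed by one of ours. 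Summing over $k \in I$ produces a collection $\{c_i\}_{i=1}^N$ with $N \leq A_X \frac{1}{\varepsilon^2}$ and $\overline{c} \leq l \leq 1$.

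Now apply Theorem~\ref{thm:maintech} to $\{c_i\}_{i=1}^N$ with the parameter $\frac{\varepsilon}{2}$ in place of $\varepsilon$: we get a closed geodesic $\gamma_\varepsilon$ with each $c_i \subset B_{\varepsilon/2}(\gamma_\varepsilon)$ and
$$
\ell(\gamma_\varepsilon) \leq N\left(K_X + \overline{c} + 2\log\left(\tfrac{2}{\varepsilon}\right)\right) \leq A_X \frac{1}{\varepsilon^2}\left(K_X + 1 + 2\log 2 + 2\log\left(\tfrac{1}{\varepsilon}\right)\right) \leq C_X \frac{1}{\varepsilon^2}\log\left(\tfrac{1}{\varepsilon}\right),
$$
absorbing the $\varepsilon\le\frac12$ hypothesis to combine the constants (this also pins down the $\varepsilon_0$ in the statement). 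Given an arbitrary oriented geodesic segment $\sigma$ of length $l$ on $X$, its midpoint lies in some $B_{R_X}(p_k)$, so $\sigma$ is within $\frac{\varepsilon}{2}$ of some $c_i$, which in turn is within $\frac{\varepsilon}{2}$ of $\gamma_\varepsilon$; hence $\sigma \subset B_\varepsilon(\gamma_\varepsilon)$, i.e.\ $\gamma_\varepsilon$ is $\varepsilon$-dense in $UT(X)$.

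The main obstacle is the local shadowing step — making precise that closeness of basepoint and direction at scale $\varepsilon$ (suitably rescaled by $l$) forces $C^0$-closeness of the two length-$l$ geodesic segments at scale $\frac{\varepsilon}{2}$, and in particular that the notion of $\varepsilon$-density in $UT(X)$ via length-$l$ segments is genuinely captured by a finite $O(\frac{1}{\varepsilon^2})$-size family. This is where one has to be slightly careful that the implicit constants depend only on $X$ (through $l$ and $R_X$) and not on $\varepsilon$; once that is granted, everything else is a direct reprise of the argument for Theorem~\ref{thm:denseX}. A secondary point worth noting, but not an obstacle, is that Theorem~\ref{thm:maintech} already guarantees $c_i \subset B_\varepsilon(\gamma)$ as unoriented sets, and since $\gamma_\varepsilon$ is a long generic geodesic it traverses a shadow of each $c_i$ in \emph{some} direction; if one wants the orientation to match one simply includes both orientations of each sampled segment in the family, at the cost of a harmless factor of $2$ in $N$.
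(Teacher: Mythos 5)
Your overall strategy (build an $\varepsilon$-scale family of oriented segments capturing all length-$l$ segments, then feed it to Theorem~\ref{thm:maintech}) is the right one and is what the paper does, but there is a genuine quantitative gap in your counting, and it is exactly where the theorem's exponent comes from. Your local family in each ball consists of one length-$l$ segment per (basepoint, direction) pair, with an $\frac{\varepsilon}{8}$-net of basepoints ($O(\varepsilon^{-2})$ of them) and an $\frac{\varepsilon}{8l}$-net of directions ($O(\varepsilon^{-1})$ of them); that is $O(\varepsilon^{-3})$ segments per ball, not the $O(\varepsilon^{-2})$ you assert when you write $N\leq A_X\varepsilon^{-2}$. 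This is not a slip you can absorb into constants: you are building an $\varepsilon$-net of the $3$-dimensional unit tangent bundle, and with sampled segments of length exactly $l$ you really do need $\varepsilon$-scale spacing of basepoints even in the direction of motion (a length-$l$ segment is contained in the $\frac{\varepsilon}{2}$-neighborhood of its translate along the same geodesic only if the translation is of order $\varepsilon$). So your construction, run through Theorem~\ref{thm:maintech}, only yields $\ell(\gamma_\varepsilon)\lesssim \frac{1}{\varepsilon^{3}}\log\frac{1}{\varepsilon}$, which is weaker than the statement.

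The way the paper avoids the third factor of $\frac{1}{\varepsilon}$ is to sample \emph{long} segments rather than length-$l$ ones, so that the position along the geodesic does not need to be sampled at all: cover $X$ by balls of radius $2R_X$ (enlarged precisely so that every length-$l$ segment with a point in the $R_X$-net ball lies entirely in the larger ball --- note your version has the same issue in miniature, since a segment with midpoint in $B_{R_X}(p_k)$ need not have its basepoint there), place roughly $\frac{1}{\varepsilon}$ points on the boundary circle of each ball, and take \emph{all} oriented chords joining pairs of these boundary points, about $\frac{1}{\varepsilon^{2}}$ per ball, each of bounded length. Any oriented segment of length $l$ inside the ball extends to a full chord, whose endpoints are within $\varepsilon$ of boundary sample points, and by convexity of the distance between geodesics the sampled chord then contains the original segment in its $\frac{\varepsilon}{2}$-neighborhood. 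This gives $N\lesssim \varepsilon^{-2}$ with $\overline{c}$ bounded by a constant of $X$, and the rest of your argument (applying Theorem~\ref{thm:maintech} with parameter $\frac{\varepsilon}{2}$ and triangulating $\frac{\varepsilon}{2}+\frac{\varepsilon}{2}$) goes through verbatim. Your closing remark about orientation is fine and harmless, since the paper's notion of $\varepsilon$-density in $UT(X)$ only asks that the oriented segment be contained in the $\varepsilon$-neighborhood of $\gamma$ as a set.
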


\begin{proof}
The proof is identical to the proof of Theorem \ref{thm:denseX} with the following exceptions.

When choosing the set of balls that cover $X$, we choose the balls to be of radius $2 R_X$ (they may not be embedded but we can consider them in the universal cover and project to $X$). This is to ensure that we capture the totality of oriented segments of length $\ell$. 

Now when constructing the geodesic segments, instead of choosing roughly $\frac{1}{\varepsilon}$ segments that lie between the points $q_k$, we choose all oriented geodesic segments that lie between them. As there are roughly $\frac{1}{\varepsilon}$ points $q_k$, there are roughly $\frac{1}{\varepsilon^2}$ segments this time, hence the difference in the estimate.
\end{proof}

We end the paper with the corollary of Theorem \ref{thm:denseX}, mentioned in the introduction.

\begin{repcorollary}{cor:int}
For all $X\in \M_g$ there exists a constant $C_X>0$ and an $\varepsilon_0$ such that for all $\varepsilon \leq \varepsilon_0$ there exists a closed geodesic $\gamma_\varepsilon$ that is $\varepsilon$-dense on $X$ and such that
$$
i(\gamma_\varepsilon, \gamma_\varepsilon) \leq C_X \frac{1}{\varepsilon^2}  \left(\log\left(\frac{1}{\varepsilon}\right)\right)^2
$$
\end{repcorollary}

\begin{proof}
This follows directly from the length estimates on $\gamma_\varepsilon$ from Theorem \ref{thm:denseX} as any geodesic of length $\ell$ can have at most roughly $\ell^2$ self-intersection points by the following argument. Take the geodesic of length $\ell$ and break it up into segments of length $\ell_0$ where $\ell_0$ is less than the minimal injectivity radius of $X$. Now any two segments of length $\ell_0$ can intersect at most once.
\end{proof}

\addcontentsline{toc}{section}{References}
\bibliographystyle{plain}

\end{document}